\newtheoremstyle{myremark} 
    {7pt}                    
    {7pt}                    
    {}  	                 
    {}                           
    {\bf}       	         
    {.}                          
    {.5em}                       
    {}  
\theoremstyle{plain}
\newtheorem{lemma}{Lemma}
\newtheorem{theorem}[lemma]{Theorem}
\newtheorem{proposition}[lemma]{Proposition}
\theoremstyle{myremark}
\newcommand{\nicef}{\mathcal{F}}
\newcommand{\nicem}{\mathcal{M}}
\newcommand{\longversion}[1]{}
\begin{document}
\title{Face numbers of down-sets}

\author[Micha{\l} Adamaszek]{Micha{\l} Adamaszek}
\address{Institute of Mathematics, University of Bremen
  \newline Bibliothekstr. 1, 28359 Bremen, Germany}
\email{aszek@mimuw.edu.pl}


\begin{abstract}
We compare various viewpoints on down-sets (simplicial complexes), 
illustrating how the combinatorial inclusion-exclusion principle may serve as
an alternative to more advanced methods of studying their face numbers.

\end{abstract}
\maketitle

\noindent
For any family $\Delta\subseteq 2^{[n]}$ of subsets of some ground set $[n]=\{1,\ldots,n\}$ we define a two-variable generating function
$$H_\Delta(x,y)=\sum_{\sigma\in\Delta} x^{|\sigma|}y^{n-|\sigma|}$$
and its two specializations, the $f$- and $K$-polynomial:
$$f_\Delta(t)=H_\Delta(t,1) \quad\textrm{and}\quad K_\Delta(t)=H_\Delta(t,1-t).$$

The family $\Delta$ is called a \emph{down-set} if it is closed under taking subsets; that is $\sigma\in\Delta$ and $\tau\subseteq \sigma$ imply $\tau\in\Delta$. For a down-set $\Delta$, let $\nicef=\{F_1,\ldots,F_m\}$ be the family of \emph{maximal elements} in $\Delta$ and let $\nicem=\{M_1,\ldots,M_k\}$ be the family of minimal elements of $2^{[n]}\setminus \Delta$, which will also be called the \emph{blockers} of $\Delta$. 

Clearly every object of the triple $\Delta$, $\nicef$, $\nicem$ determines the other two, 
but the representation via $\nicef$ or $\nicem$ is usually much smaller than the listing of all of $\Delta$. 
We will start with a brief survey of down-sets in various branches of
mathematics. Different points of view highlight different aspects of the
relationship between $\nicef$, $\nicem$, $f_\Delta(t)$ and $K_\Delta(t)$.
We will then give a formula for $H_\Delta(x,y)$ in terms of the combinatorics
of $\nicef$ and $\nicem$ and show its interpretations in the different 
approaches to down-sets.

The ways of thinking of down-sets we wish to consider are the following.
\begin{itemize}
\item In geometry, a down-set is usually called an \emph{abstract simplicial complex}. The elements of $\Delta$ are its \emph{faces}, enumerated by the face polynomial $f_\Delta(t)$ which is a well-studied combinatorial invariant (see \cite{SS}). The elements of $\nicef$ are the highest-dimensional faces (or \emph{facets}) of $\Delta$. By taking a simplex spanned on $\ell$ vertices for each $\ell$-element face in $\Delta$ and gluing them together, we obtain the geometric realization of $\Delta$, as in the figure.
\begin{figure}[ht!]
\begin{center}
\includegraphics{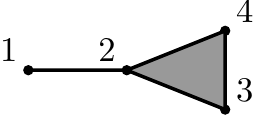}
\caption{The geometric realization of the down-set $\Delta=\{\emptyset,1,2,3,4,12,23,24,34,234\}\subseteq 2^{[4]}$ with maximal elements $\nicef=\{12,234\}$ and blockers $\nicem=\{13,14\}$}
\end{center}
\end{figure}

\item In commutative algebra, every element $M\in\nicem$ can be identified with a square-free monomial $\mathbf{x}_M=\prod_{i\in M} x_i$ in the polynomial ring $R=\mathbb{F}[x_1,\ldots,x_n]$. If $I=(\mathbf{x}_{M_1},\ldots,\mathbf{x}_{M_k})$, then $K_\Delta(t)/(1-t)^n$ is the Hilbert series of the $R$-module $R/I$, with the latter usually referred to as the Stanley-Reisner ring of $\Delta$. As one would expect, the correspondence between simplicial complexes and their Stanley-Reisner rings is the source of a fruitful interaction between combinatorial geometry and commutative algebra.

\item In complexity theory, $\nicem$ determines a monotone Boolean function in disjunctive normal form
$$\varphi_\nicem(x_1,\ldots,x_n)=\bigvee_{M\in\nicem}\bigwedge_{i\in M} x_i$$
with variables $x_1,\ldots, x_n$. Then $\Delta$ is the set of non-satisfying assignments for $\varphi_\nicem$. The problem of expressing the formula $\overline{\varphi_\nicem}(\overline{x_1},\ldots,\overline{x_n})$ again in disjunctive normal form, known as \emph{monotone dualization} or \emph{hypergraph transversal}, is equivalent to computing $\nicef$ from $\nicem$. The decision version of this problem is ``Given $\nicem$ and $\nicef$, do they represent the same $\Delta$?''. The question has received a lot of attention due to its applicability in various areas of theoretical computer science, and it is open whether it can be answered in time polynomial with respect to the size of the combined description of $\nicem$ and $\nicef$. The best known algorithm requires quasi-polynomial time \cite{D}.
\end{itemize}

The reader is welcome to consult \cite{SM,EMG} for a more thorough treatment of the above topics. For example, \cite{SM} provides an exposition based on the ideas of combinatorial commutative algebra. Here is our main result, illustrating how the elementary inclusion-exclusion principle may serve as an alternative to the homological algebra of monomial ideals.

\begin{theorem}
\label{mainthm}
For a down-set $\Delta\subseteq 2^{[n]}$ with maximal elements $\nicef$ and blockers $\nicem$ we have
\begin{eqnarray*}
H_\Delta(x,y)&=& \sum_{\emptyset\neq S\subseteq\nicef}(-1)^{|S|+1}(x+y)^{|\bigcap S|}y^{n-|\bigcap S|}\\
&=&\sum_{T\subseteq\nicem}(-1)^{|T|}(x+y)^{n-|\bigcup T|}x^{|\bigcup T|}.
\end{eqnarray*}
\end{theorem}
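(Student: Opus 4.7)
The plan is to apply weighted inclusion--exclusion to $H_\Delta$ directly, exploiting the two dual descriptions of membership in $\Delta$: a subset $\sigma\subseteq[n]$ belongs to $\Delta$ if and only if $\sigma\subseteq F$ for some $F\in\nicef$, and equivalently if and only if $\sigma\not\supseteq M$ for every $M\in\nicem$. The only computational ingredient beyond inclusion--exclusion is the pair of binomial identities
$$\sum_{\sigma\subseteq A}x^{|\sigma|}y^{n-|\sigma|}=y^{n-|A|}(x+y)^{|A|},\qquad \sum_{\sigma\supseteq B}x^{|\sigma|}y^{n-|\sigma|}=x^{|B|}(x+y)^{n-|B|},$$
obtained by splitting the coordinates into those inside and those outside $A$ (respectively $B$) and applying the binomial theorem to the free coordinates.

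For the first formula I would introduce $A_F=\{\sigma\subseteq[n]:\sigma\subseteq F\}$ for each $F\in\nicef$, so that $\Delta=\bigcup_F A_F$ and $\bigcap_{F\in S}A_F=\{\sigma:\sigma\subseteq\bigcap S\}$. Inclusion--exclusion applied to the weight $x^{|\sigma|}y^{n-|\sigma|}$ then gives
$$H_\Delta(x,y)=\sum_{\emptyset\neq S\subseteq\nicef}(-1)^{|S|+1}\sum_{\sigma\subseteq\bigcap S}x^{|\sigma|}y^{n-|\sigma|},$$
and evaluating the inner sum by the first binomial identity yields the claimed expression.

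For the second formula I would instead pass to the complement: with $B_M=\{\sigma:\sigma\supseteq M\}$ we have $2^{[n]}\setminus\Delta=\bigcup_M B_M$ and $\bigcap_{M\in T}B_M=\{\sigma:\sigma\supseteq\bigcup T\}$. Subtracting the inclusion--exclusion expansion of the complement from the total sum $\sum_{\sigma\subseteq[n]}x^{|\sigma|}y^{n-|\sigma|}=(x+y)^n$, and absorbing this total into the $T=\emptyset$ term, gives
$$H_\Delta(x,y)=\sum_{T\subseteq\nicem}(-1)^{|T|}\sum_{\sigma\supseteq\bigcup T}x^{|\sigma|}y^{n-|\sigma|},$$
which becomes the desired formula after the second binomial identity is applied.

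I do not anticipate any serious obstacle: the result is essentially a clean repackaging of the inclusion--exclusion principle, and the weight $x^{|\sigma|}y^{n-|\sigma|}$ is engineered so that both inner sums factor without effort. The only minor care concerns the boundary terms, but the conventions that $\bigcap S$ is taken inside $[n]$ for $S\neq\emptyset$ and that $\bigcup\emptyset=\emptyset$ handle them without incident.
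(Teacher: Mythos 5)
Your proposal is correct and is essentially the paper's own argument: the paper also applies inclusion--exclusion over nonempty $S\subseteq\nicef$ (counting $\ell$-subsets of $\bigcap S$) and over $T\subseteq\nicem$ (counting $\ell$-supersets of $\bigcup T$), merely extracting the coefficient of $x^\ell y^{n-\ell}$ instead of summing the weight $x^{|\sigma|}y^{n-|\sigma|}$ directly. Your weighted formulation with the sets $A_F$ and $B_M$ and the two binomial evaluations is just a slightly more explicit packaging of the same computation.
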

\begin{proof}
The coefficient of $x^\ell y^{n-\ell}$ in $H_\Delta(x,y)$ is the number of sets of cardinality $\ell$ in $\Delta$. It is given by either of the formulae
$$\sum_{\emptyset\neq S\subseteq\nicef}(-1)^{|S|+1}{|\bigcap S|\choose \ell}\quad\textrm{or}\quad\sum_{T\subseteq\nicem}(-1)^{|T|}{n-|\bigcup T|\choose \ell-|\bigcup T|}.$$
Both expressions are instances of the inclusion-exclusion principle, except that instead of including-excluding elements, as it is usually done, we include-exclude entire $\ell$-subsets of sets in $\nicef$ or $\ell$-supersets of sets in $\nicem$. From this the formulae for $H_\Delta(x,y)$ immediately follow.
\end{proof}

Here are some applications.
\begin{itemize}
\item[1)] We can express the $K$-polynomial of $\Delta$ in terms of the blockers $\nicem$:
\begin{equation}
\label{eq:kk}
K_\Delta(t)=H_\Delta(t,1-t)=\sum_{T\subseteq\nicem}(-1)^{|T|}t^{|\bigcup T|}.
\end{equation}
This is exactly the main result of \cite{G}, with special cases appearing also in earlier work. In fact the formula $H_\Delta(x,y)=(x+y)^nK_\Delta(x/(x+y))$ implies that \eqref{eq:kk} is equivalent to Theorem~\ref{mainthm}.
\item[2)] The reduced Euler characteristic of $\Delta$ is $\widetilde{\chi}(\Delta)=f_\Delta(-1)=H_\Delta(-1,1)$. In a similar fashion we get
$$\widetilde{\chi}(\Delta)=H_\Delta(-1,1)=\sum_{T\subseteq\nicem}(-1)^{|T|}0^{n-|\bigcup T|}(-1)^{|\bigcup T|}=(-1)^n\sum_{\substack{T\subseteq\nicem\\\bigcup T=[n]}}(-1)^{|T|},$$
as shown in \cite[Thm.4.2]{MT}.
\item[3)] The dual version of the above is
\begin{eqnarray*}
\widetilde{\chi}(\Delta)=\sum_{\emptyset\neq S\subseteq\nicef}(-1)^{|S|+1}0^{|\bigcap S|}&=&\sum_{\substack{S\subseteq\nicef\\\bigcap S=\emptyset}}(-1)^{|S|+1}
=\sum_{\substack{S\subseteq\nicef\\\bigcap S\neq\emptyset}}(-1)^{|S|},
\end{eqnarray*}
which follows also from the fact, familiar to topologists, that a simplicial complex $\Delta$ is homotopy equivalent to the nerve of the family $\nicef$, hence their Euler characteristics are equal.
\item[4)] In the language of the $f$-polynomial Theorem~\ref{mainthm} reads
$$f_\Delta(t)=H_\Delta(t,1)=\sum_{\emptyset\neq S\subseteq\nicef}(-1)^{|S|+1}(t+1)^{|\bigcap S|}.$$
\end{itemize}

It is worth mentioning that the apparent symmetry of the two formulae in Theorem~\ref{mainthm} is a manifestation of Alexander duality, which prevails in all examples of down-sets we mentioned earlier. For a down-set $\Delta\subseteq 2^{[n]}$ define
$$\Delta^*=\{[n]\setminus\sigma~:~\sigma\in 2^{[n]}\setminus\Delta\}.$$
Then $\Delta^*$ is again a down-set which we call the \emph{Alexander dual} of $\Delta$. The maximal elements of $\Delta^*$ are $F_i^*=[n]\setminus M_i$ for the blockers $M_1,\ldots,M_k$ of $\Delta$, while the blockers of $\Delta^*$ are $M_j^*=[n]\setminus F_j$ for the maximal elements $F_1,\ldots,F_m$ of $\Delta$. We will denote those new families by $\nicef^*=\{F_1^*,\ldots,F_k^*\}$ and $\nicem^*=\{M_1^*,\ldots,M_m^*\}$. Note that $(\Delta^*)^*=\Delta$ and $\overline{\varphi_\nicem}(\overline{x_1},\ldots,\overline{x_n})=\varphi_{\nicem^*}(x_1,\ldots,x_n)$.

\begin{proposition}
\label{prop}
For a down-set $\Delta\subseteq 2^{[n]}$ we have 
$$H_\Delta(x,y)+H_{\Delta^*}(y,x)=(x+y)^n.$$
\end{proposition}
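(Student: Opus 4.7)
The plan is to argue directly from the definition of the Alexander dual, without invoking Theorem~\ref{mainthm}. The key observation is that $H_\Delta(x,y)$ enumerates the elements of $\Delta$ with weight $x^{|\sigma|}y^{n-|\sigma|}$, while $H_{\Delta^*}(y,x)$, after a change of summation variable, enumerates precisely the elements of $2^{[n]}\setminus\Delta$ with the very same weights. The two sums are complementary and combine to the full binomial expansion of $(x+y)^n$.

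In detail, I would first expand
$$H_{\Delta^*}(y,x)=\sum_{\tau\in\Delta^*} y^{|\tau|}x^{n-|\tau|}.$$
By definition, $\tau\in\Delta^*$ if and only if $\tau=[n]\setminus\sigma$ for some $\sigma\in 2^{[n]}\setminus\Delta$, and the assignment $\sigma\mapsto[n]\setminus\sigma$ is a bijection between $2^{[n]}\setminus\Delta$ and $\Delta^*$. Substituting this, together with $|\tau|=n-|\sigma|$, converts the display above into
$$H_{\Delta^*}(y,x)=\sum_{\sigma\in 2^{[n]}\setminus\Delta} x^{|\sigma|}y^{n-|\sigma|}.$$

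Adding $H_\Delta(x,y)$ then yields an unrestricted sum over all subsets of $[n]$, which by the binomial theorem gives
$$H_\Delta(x,y)+H_{\Delta^*}(y,x)=\sum_{\sigma\in 2^{[n]}} x^{|\sigma|}y^{n-|\sigma|}=\sum_{\ell=0}^{n}{n\choose\ell}x^\ell y^{n-\ell}=(x+y)^n,$$
as required. There is essentially no obstacle: the argument is one re-indexing followed by the binomial theorem, and the families $\nicef$, $\nicem$ play no role. One could alternatively derive the proposition by applying Theorem~\ref{mainthm} to both $\Delta$ and $\Delta^*$ and matching terms under the complementation $F_i^*=[n]\setminus M_i$, $M_j^*=[n]\setminus F_j$, but the direct bijective proof above is cleaner and arguably more conceptual.
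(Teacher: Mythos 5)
Your proof is correct, and it takes a genuinely different route from the one printed in the paper. The paper deduces the proposition from Theorem~\ref{mainthm}: it writes $H_\Delta(x,y)$ via the blocker formula and $H_{\Delta^*}(y,x)$ via the facet formula for $\Delta^*$, uses $F_i^*=[n]\setminus M_i$ to re-index the second sum over subsets of $\nicem$, and observes that everything cancels except the $T=\emptyset$ term $(x+y)^n$. Your argument is instead the direct bijective one: the complementation $\sigma\mapsto[n]\setminus\sigma$ identifies $\Delta^*$ with $2^{[n]}\setminus\Delta$ and swaps the exponents, so $H_\Delta(x,y)$ and $H_{\Delta^*}(y,x)$ enumerate complementary parts of $2^{[n]}$ and sum to $(x+y)^n$ by the binomial theorem. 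This is precisely the proof the paper explicitly invites the reader to find (``the reader is encouraged to find a proof based only on the definition of $H_\Delta(x,y)$''), and it buys something concrete: since your proof is independent of Theorem~\ref{mainthm}, it shows that the two formulae of that theorem are equivalent to one another under Alexander duality, whereas the paper's proof of the proposition presupposes both. The paper's route, on the other hand, serves as a consistency check on the two expressions in Theorem~\ref{mainthm} and illustrates the duality at the level of those formulae. Your closing remark already identifies this trade-off accurately.
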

\begin{proof}
We will give a proof which uses Theorem~\ref{mainthm}, but the reader is encouraged to find a proof based only on the definition of $H_\Delta(x,y)$ and then check that under Proposition~\ref{prop} each of the two parts of Theorem~\ref{mainthm} implies the other. We obtain
\begin{align*}
&H_\Delta(x,y)+ H_{\Delta^*}(y,x)=\\
&=\sum_{T\subseteq\nicem}(-1)^{|T|}(x+y)^{n-|\bigcup T|}x^{|\bigcup T|}+\sum_{\emptyset\neq S^*\subseteq\nicef^*}(-1)^{|S^*|+1}(x+y)^{|\bigcap S^*|}x^{n-|\bigcap S^*|}\\
&=\sum_{T\subseteq\nicem}(-1)^{|T|}(x+y)^{n-|\bigcup T|}x^{|\bigcup T|}+\sum_{\emptyset\neq T\subseteq\nicem}(-1)^{|T|+1}(x+y)^{n-|\bigcup T|}x^{|\bigcup T|}.
\end{align*}
The only contribution which is not cancelled out is $(x+y)^n$ arising from $T=\emptyset$.
\end{proof}

\medskip
If we only perform the first approximation to the inclusion-exclusion formula, that is take the union bound, after some manipulation we derive for $x,y\geq 0$ the inequality
$$H_\Delta(x,y)\leq \sum_{F\in\nicef} (x+y)^{|F|}y^{n-|F|} = \sum_{M^*\in\nicem^*} (x+y)^{n-|M^*|}y^{|M^*|}.$$
Taking $x=y=\frac12$ in Proposition~\ref{prop} and using the above we obtain
\begin{equation}
\label{eq}
\sum_{M^*\in \nicem^*}2^{-|M^*|}+\sum_{M\in \nicem}2^{-|M|}\geq 1,
\end{equation}
which is well known in the theory of the dualization of disjunctive normal forms \cite[Lemma 1]{D} as a tool for estimating the joint size of a formula and its dual. To see how much \eqref{eq} deviates from equality one takes $x=y=\frac12$ in Proposition~\ref{prop}:
$$\sum_{T^*\subseteq\nicem^*}(-1)^{|T^*|}2^{-|\bigcup T^*|}+\sum_{T\subseteq\nicem}(-1)^{|T|}2^{-|\bigcup T|}=1.$$

\subsection*{Acknowledgment.} The author thanks the referees for helpful remarks concerning the presentation. This work was supported by the DFG grant FE 1058/1-1.



\begin{thebibliography}{99}

\bibitem{EMG} T. Eiter, K. Makino, G. Gottlob, Computational aspects of monotone dualization: A brief survey, \textit{Discrete Appl. Math.} {\bf 156} (2008) 2035--2049.

\bibitem{D} M. L. Fredman, L. Khachiyan, On the Complexity of Dualization of Monotone Disjunctive Normal Forms, \textit{Journal of Algorithms} {\bf 21} (1996) 618--628.

\bibitem{G} A. Goodarzi, On the Hilbert series of monomial ideals, \textit{J. Combin. Theory Ser. A} {\bf 120} (2013) 315--317.

\bibitem{MT} M. Marietti, D. Testa, Conical and spherical graphs, \textit{European J. Combin.} {\bf 33} (2012) 1609--1618.

\bibitem{SM} E. Miller, B. Sturmfels, \textit{Combinatorial Commutative Algebra}, Graduate Texts in Mathematics, Vol. 227, Springer-Verlag, New York, 2005.

\bibitem{SS} R. Stanley, \textit{Combinatorics and commutative algebra}, second edition. Progress in Mathematics, Vol. 41, Birk\"auser Boston, Inc., Boston, MA, 1996.
\end{thebibliography}
\end{document}